\title{Generating Functions of Some Families of
  Directed Uniform Hypergraphs}
\author{Rasendrahasina
  Vonjy\thanks{ENS -- Universit\'e d'Antananarivo,
    Madagascar:\href{mailto:rasendrahasina@gmail.com}{rasendrahasina@gmail.com}. },  
  \and Ravelomanana Vlady\thanks{
    IRIF -- UMR CNRS 8243 -- Universit\'e de Paris,
    France:\href{mailto:vlad@irif.fr}{vlad@irif.fr}.}} 
\date{}
\newtheorem{thm}{Theorem}
\newtheorem{lem}{Lemma}
\newtheorem{remark}{Remark}
\newtheorem{dfn}{Definition}
\definecolor{bblue}{rgb}{0.2, 0.4, 0.8}
\definecolor{bgreen}{rgb}{0.2, 0.6, 0.4}
\definecolor{bred}{rgb}{0.8, 0.4, 0.2}
\definecolor{bviolet}{rgb}{0.7, 0.2, 0.7}
\definecolor{blackred}{rgb}{0.6, 0.3, 0.3}
\definecolor{blackblue}{rgb}{0.3, 0.3, 0.6}
\tikzstyle{none}=[inner sep=0pt]
\definecolor{hexcolor0x0c07e9}{rgb}{0.027,0.502,0.914}
\definecolor{hexcolor0x190707}{rgb}{0.098,0.027,0.027}
\definecolor{Black}{rgb}{0.098,0.027,0.027}
\tikzstyle{bn}=[circle,fill=hexcolor0x0c07e9,draw=Black,line width=0.8 pt]
\tikzstyle{gn}=[circle,fill=Lime,draw=Black,line width=0.8 pt]
\tikzstyle{kn}=[circle,fill=hexcolor0x190707,draw=Black]
\tikzstyle{arc}=[->,draw=Black,line width=1.400]
\tikzstyle{arc2}=[-,draw=Black,postaction={decorate},decoration={markings,mark=at position .5 with {\arrow{>}}},line width=1.400]
\tikzstyle{edge}=[-,draw=Black,line width=1.400]
\begin{document}

\maketitle

    \begin{abstract}
In this paper, we  count acyclic and strongly connected
  uniform directed labeled  hypergraphs. For these combinatorial
  structures, we introduce a specific
  generating function allowing us to
  recover  and generalize some results 
  on the number of directed acyclic graphs and 
  the number of strongly connected directed graphs.
    \end{abstract}

    \section{Introduction}
A  \textit{directed graph} or \textit{digraph} consists of a finite node set $\mathcal{V}$ with a subset $\mathcal{E}$ of
$\mathcal{V} \times \mathcal{V}$ (the arcs) and we do not allow neither loops nor multiple arcs.

In the seventies, several researchers including
Liskovets~\cite{Liskovets69a,Liskovets69b}, Robinson~\cite{Robinson70,Robinson73}, Stanley~\cite{Stanley73}
or Wright~\cite{Wright1971} studied enumerative aspects of important families of digraphs including
\textit{Directed Acyclic Graphs} (DAGs) or \textit{strongly connected digraphs}.

A hypergraph is a generalization of a graph in which an (hyper)edge can join any number of nodes.
Hypergraphs have been extensively studied~\cite{Berge73,Berge89} as they are
 very useful  to model
concepts and structures in various aspects of Computer Science (combinatorial optimization,
algorithmic game theory, machine learning, constraint satisfaction problem,
data mining and indexing, ...).

In this paper, we deal with directed hypergraphs or  simply \textit{dihypergraphs}
 (also known as \textit{And/Or graphs}~\cite{Martelli73,Levi76,Gallo1993}).
As far as we know, these objects have been introduced in the Computer Science
literature by Boley as a representation language~\cite{Boley}. For detailed surveys on directed hypergraphs,
 algorithms and applications, we refer the reader to the papers of
Gallo, Longo, Pallotino and Nguyen~\cite{Gallo1993} and of Ausiello and Luigi~\cite{Ausiello}.
Following the recent enumerative results on digraphs
of de Panafieu and Dovgal~\cite{Dovgal2019}, Archer, Gessel, Graves and Liang~\cite{Gessel2019},
 our aim in this article is to study enumerative aspects of some
 families of dihypergraphs.
 
\section{Definitions}

Terminology for   dihypergraphs  is established
in the book of Harary,   Norman  and \\ Cartwright~\cite{Harary1965}  or
in the  paper of Gallo, Longo, Pallottino and  Nguyen~\cite{Gallo1993}.

A {directed (labeled) hypergraph}  (or simply {dihypergraph})
$\mathcal H$ is a pair $(\mathcal V, \mathcal E)$ where $\mathcal V$
is a non-empty finite set of {nodes} and $\mathcal E$ is a set
of ordered  pairs of non-empty subsets of $\mathcal V$  called
\emph{directed hyperedges } (or \emph{hyperarcs}).
That means a hyperarc  $e$ is an ordered pair
$(T(e), H(e))$, of  disjoint subsets of $\mathcal V$ such that $T(e)
\neq \emptyset$, $H(e)\neq \emptyset$.  $T(e)$ is called the
\emph{tail}  of  the hyperarc $e$  while $H(e)$ is its \emph{head}.

A dihypergraph $\mathcal H = (\mathcal V,\mathcal E)$  is called
\emph{$b$-uniform} iff  for any $e\in \mathcal E$, $|T(e)| + |H(e)|=b$
(that is all hyperarcs are built with  the same number of
nodes). Clearly, the $2$-uniform dihypergraph is  the standard
digraph. The dihypergraph $(\emptyset, \emptyset)$ is called the empty
dihypergraph.

A \emph{directed path} (or  \emph{path}) $P_{st}$ of length $\ell$
in a dihypergraph $\mathcal H = (\mathcal V,\mathcal E)$, is a sequence
of  nodes and  hyperarcs
$P_{st}=\left(v_1=s,e_{i_1},v_2,\ldots,v_\ell,e_{i_{\ell}},v_{\ell+1}=t\right)$
where:
\[
  s \in T(e_{i_1}),\quad   t \in T(e_{i_\ell}) \mbox{ and }
  v_j\in T(e_{i_{j-1}})\cap H(e_{i_j}) \mbox{ for } j=2..\ell.
\]
Nodes  $s$ and $t$ are respectively the \emph{origin}
and the \emph{destination} of the path $P_{st}$
and we say that $t$ is connected to $s$.
The path $P_{st}$ is said  \emph{simple}  if all nodes on the path 
are distincts except possibly the origin $s$ and the destination $t$. A
\emph{directed cycle} (or simply \emph{cycle}) in  a dihypergraph is a
path where the 
origin and the destination coincide. 
A dihypergraph is said \emph{acyclic} iff it has no cycle.

Given a dihypergraph $\mathcal H = (\mathcal V, \mathcal E)$,
we define the relation $\mathcal  R$ on $\mathcal V$
by $u \, \mathcal R \, v$ if there is a
(directed) path
from $u$ to $v$ in $\mathcal H$ and vice versa.
 It is easy to show that $\mathcal R$ is an
equivalence relation on $\mathcal V$.
The equivalence classes are called the \emph{strongly connected components} of
$\mathcal H$.  
A dihypergraph is \emph{strongly connected} (or simply \emph{strong})
if it has a unique strong component.

%% degree to add here

According to Robinson~\cite{Robinson70, Robinson73}
an \emph{out-component} of a digraph is a strong 
component which cannot be reached from any other
strong component. Such a component is  called 
\emph{source strong  component} by Gessel~\cite{Gessel2019}
and \emph{source-like strong
  connected component} by de Panafieu and Dovgal~\cite{Dovgal2019}.
A source (strong) component is called simply a \emph{source} if it contains
exactly one node.

Obviously, we have  the following  Lemma.
\begin{lem}\label{lem:out-component}
  Every non-empty dihypergraph has  at least a source strong  component.
\end{lem}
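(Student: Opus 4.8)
The plan is to pass to the \emph{condensation} of $\mathcal H$, that is, to the structure induced on the set of its strong components, and then to extract a source by a finiteness/extremal argument. First I would observe that, since $\mathcal V$ is finite and non-empty, the equivalence relation $\mathcal R$ partitions $\mathcal V$ into a finite, non-empty collection of strong components $C_1,\dots,C_k$ with $k\ge 1$. On this collection I would introduce a relation $\Rightarrow$ by declaring, for $i\ne j$, that $C_i\Rightarrow C_j$ whenever there is a directed path from some node of $C_i$ to some node of $C_j$. With this notation, saying that $C_j$ is a source strong component is precisely saying that there is no $i\ne j$ with $C_i\Rightarrow C_j$, so the lemma reduces to producing a $\Rightarrow$-minimal component.

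Next I would check that $\Rightarrow$ is a strict partial order, the decisive point being acyclicity. Transitivity follows by concatenation of paths: if a node of $C_i$ reaches a node $v\in C_j$, and a node $v'\in C_j$ reaches a node of $C_\ell$, then $v$ and $v'$ lie in the same strong component and are therefore mutually connected, so splicing the three paths yields a path from $C_i$ to $C_\ell$, whence $C_i\Rightarrow C_\ell$. Irreflexivity is built into the restriction $i\ne j$, and acyclicity is forced by $\mathcal R$ being an equivalence relation: a cycle $C_{i_1}\Rightarrow C_{i_2}\Rightarrow\cdots\Rightarrow C_{i_1}$ would make representatives of these classes pairwise mutually reachable, collapsing them into a single class of $\mathcal R$ and contradicting that the $C_{i}$ are distinct components.

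Finally I would invoke the extremal principle on the finite acyclic relation $\Rightarrow$: a non-empty finite set equipped with a strict partial order has a minimal element. Concretely, one may argue by descent: start from any component and, as long as it is reached from some other component, move to such a predecessor; acyclicity together with finiteness guarantees that this process terminates, and the terminal component $C$ satisfies $C_i\not\Rightarrow C$ for all $i$, i.e.\ it cannot be reached from any other strong component and is thus a source strong component.

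I expect the only genuine subtlety to be the verification that single-direction reachability is transitive in the hypergraph setting, since a directed path here is a sequence of hyperarcs rather than of arcs. Concretely one must confirm that the endpoint conditions in the definition of $P_{st}$ permit two paths sharing an endpoint in a common strong component to be concatenated into a legitimate path. Once path concatenation is secured, the acyclicity of $\Rightarrow$ is immediate from the fact that $\mathcal R$ is an equivalence relation, and the conclusion is a routine application of finiteness.
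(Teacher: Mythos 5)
Your argument is correct. The paper offers no proof at all here --- the lemma is introduced with ``Obviously, we have the following Lemma'' --- and your condensation argument (partition into strong components, observe that the induced reachability relation $\Rightarrow$ is transitive and acyclic because $\mathcal R$ is an equivalence relation, then extract a $\Rightarrow$-minimal component by finiteness) is exactly the standard justification the authors are implicitly relying on. The one subtlety you flag, concatenation of paths at a shared endpoint, is indeed the only point needing care: note that the paper's displayed path conditions ($t\in T(e_{i_\ell})$ and $v_j\in T(e_{i_{j-1}})\cap H(e_{i_j})$) appear to have tails and heads transposed, and under the intended reading ($s\in T(e_{i_1})$, $t\in H(e_{i_\ell})$, $v_j\in H(e_{i_{j-1}})\cap T(e_{i_j})$) the junction node of two concatenated paths satisfies precisely the internal-node condition, so your splicing step goes through.
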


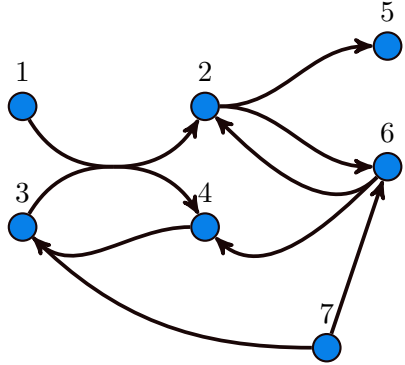
\begin{figure}[h]
  \begin{center}
\begin{tikzpicture}[>=stealth',thick, scale = 0.8]
  \begin{pgfonlayer}{nodelayer}
    \node [style=bn, label = {$1$}] (0) at (-17, 7) {};
    \node [style=bn, label = {$2$}] (1) at (-14, 7) {};
    \node [style=bn, label = {$3$}] (2) at (-17, 5) {};
    \node [style=bn, label = {$4$}] (3) at (-14, 5) {};
    \node [style=bn, label = {$5$}] (4) at (-11, 8) {};
    \node [style=bn, label = {$6$}] (5) at (-11, 6) {};
    \node [style=bn, label = {$7$}] (6) at (-12, 3) {};
    \node [style=none] (7) at (-15.5, 6) {};
  \end{pgfonlayer}
  \begin{pgfonlayer}{edgelayer}
    \draw [style=edge, bend right] (0) to (7.center);
    \draw [style=edge, bend left] (2) to (7.center);
    \draw [style=arc, bend right] (7.center) to (1);
    \draw [style=arc, bend left] (7.center) to (3);
    \draw [style=arc] (1) to  [out = 0, in = 180] (4);
    \draw [style=arc] (1) to [out = 0, in = 180] (5);
    \draw [style=arc] (5) to [out = 225, in = -45] (3);
    \draw [style=arc] (5) to [out = 225, in = -45] (1);
    \draw [style=arc] (6) to (5);
    \draw [style=arc] (3) to [out = 180, in = -45](2);
    \draw [style=edge] (6) to [out = 180, in = -45 ] (2);
  \end{pgfonlayer}
\end{tikzpicture}            
  \end{center}
  \caption{A general directed hypergraph with nodes
    $\{1,\,2,\,3,\,4,\,5,\,6,\,7\}$ 
    built with $5$ hyperarcs
    $\{1,3\} \rightarrow \{2,4\}$, 
    $\{2\} \rightarrow \{5,6\}$,
    $\{6\} \rightarrow \{2,4\}$,
    $\{7\} \rightarrow \{6\}$,
    $\{7,4\} \rightarrow \{3\}$, 
  The subset of nodes
  $\{2,6,4,3\}$ forms a directed cycle. }
\end{figure}

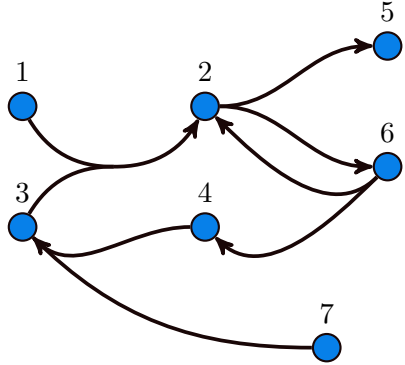
\begin{figure}[h]
  \begin{center}
\begin{tikzpicture}[>=stealth',thick, scale = 0.8]
  \begin{pgfonlayer}{nodelayer}
    \node [style=bn, label = {$1$}] (0) at (-17, 7) {};
    \node [style=bn, label = {$2$}] (1) at (-14, 7) {};
    \node [style=bn, label = {$3$}] (2) at (-17, 5) {};
    \node [style=bn, label = {$4$}] (3) at (-14, 5) {};
    \node [style=bn, label = {$5$}] (4) at (-11, 8) {};
    \node [style=bn, label = {$6$}] (5) at (-11, 6) {};
    \node [style=bn, label = {$7$}] (6) at (-12, 3) {};
    \node [style=none] (7) at (-15.5, 6) {};
  \end{pgfonlayer}
  \begin{pgfonlayer}{edgelayer}
    \draw [style=edge, bend right] (0) to (7.center);
    \draw [style=edge, bend left] (2) to (7.center);
    \draw [style=arc, bend right] (7.center) to (1);
 %   \draw [style=arc, bend left] (7.center) to (3);
    \draw [style=arc] (1) to  [out = 0, in = 180] (4);
    \draw [style=arc] (1) to [out = 0, in = 180] (5);
    \draw [style=arc] (5) to [out = 225, in = -45] (3);
    \draw [style=arc] (5) to [out = 225, in = -45] (1);
%    \draw [style=arc] (6) to (5);
    \draw [style=arc] (3) to [out = 180, in = -45](2);
    \draw [style=edge] (6) to [out = 180, in = -45 ] (2);
  \end{pgfonlayer}
\end{tikzpicture}            
  \end{center}
\caption{
  A  $3$-uniform directed hypergraph
  with nodes $\{1,\,2,\,3,\,4,\,5,\,6,\,7\}$
  and 4  hyperarcs
    $\{1,3\} \rightarrow \{2\}$, 
    $\{2\} \rightarrow \{5,6\}$,
    $\{6\} \rightarrow \{2,4\}$,
    $\{7,4\} \rightarrow \{3\}$. 
}
\end{figure}

Throughout the rest of this paper, a dihypergraph is
a $b$-uniform directed hypergraph. Similarly a hyperarc
 with $b$ nodes is called simply a hyperarc. 
 Graphs, digraphs or dihypergraphs are labeled.

\section{Hypergraphic genenerating functions}

We introduce a
new type of generating function called \emph{hypergraphic generating
  function}  defined as follow. 
The variables $x$ and $y$ are reserved to mark 
nodes and  hyperarcs. 
\begin{dfn}
The hypergraphic generating function (or simply HGF) 
for the sequence $(f_n(y))_{n\geq 0}$ is defined by
\begin{equation}\label{eq:F_xy}
  F(x,y) := \sum_{n=0}^\infty
  \frac{f_n(y)}{(1+y)^{\binom{n}{b}}}\frac{x^n}{n!},
\end{equation}
where $b\geq 2$.
\end{dfn}

Our hypergraphic generating function is a generalization of
the \emph{graphic generating function} (GGF) introduced by
Read~\cite{Read1960} and Robinson~\cite{Robinson73}.
In particular, the \emph{special generating function} of
Robinson~\cite{Robinson73}  corresponds to the  case $b=2$ and  $y=1$
and the  graphical generating function  corresponds to the case $b=2$.
Graphic generating functions are very useful as shown by the results
of Bender, Richmond,  Robinson and  Wormald~\cite{Bender1986},
of Gessel~\cite{Gessel1995}, of Gessel and  Sagan~\cite{Gessel1996},
and very recently of Archer, Gessel, Graves and Liang~\cite{Gessel2019} and
de Panafieu and  Dovgal~\cite{Dovgal2019}.

For convenience, given a family of dihypergraphs $\mathcal F$
enumerated by the sequence $(f_n(y))_{n\geq 0}$, the 
\emph{exponential generating function} (EGF) will be denoted
by
\begin{equation}\label{eq:f_xy}
  f(x,y) := \sum_{n=0}^\infty f_n(y)\frac{x^n}{n!},
\end{equation}
and its HGF by \eqref{eq:F_xy}.
As some additionnal
variables may be added for specific parameters,
 we often use  \emph{multivariate generating functions} 
 (see~Flajolet and Sedgewick~\cite[Definition~III.4]{FSBook} for
 \emph{multi-index convention}).
\begin{dfn}
The exponential  multivariate
generating function 
of a family $\mathcal F$ will be denoted by
\[
  f(x,y, u) = \sum_{n,p}f_{n, p}(y)
   u^{ p} \frac{x^n}{n!},
\]
and the corresponding multivariate hypergraphic generating function
 is
\[
  F(x,y, u) = \sum_{n,p}\frac{f_{n,
      p}(y)}{(1+y)^{\binom{n}{b}}} 
   u^{p} \frac{x^n}{n!},
\]
where $u$ the variable for some source component.
Throughout this paper, the quantities $f(x,y,1)$ and
  $F(x,y, 1)$   coincide with $f(x,y)$ and $F(x,y)$ respectively.
\end{dfn}

We observe that the HGF is obtained by dividing the coefficient of
$n! \, x^n$ in the EGF by $(1+y)^{\binom{n}{b}}$.
This linear operation is named by Robinson~\cite{Robinson73}
 as $\Delta$ for the case $b=2$ and $y=1$. We can  use similar
 notation to  convert an EGF to a HGF of family of dihypergraphs
 $\mathcal F$.
 \begin{dfn}\label{def:delta}
   Let $\mathcal F$ be a family of dihypergraphs with EGF $f$ and HGF $F$.
   We define  $\Delta_{y,b}$ as the linear operator on generating
   functions which transform $f$ into $F$ :
   \begin{equation}\label{Delta}
   F(x,y) = \Delta_{y,b} \left(f(x,y) \right)\, .
 \end{equation}
\end{dfn}
 \noindent
 Let us remark that the operator  $\Delta_{y,b}$ acts only w.r.t. the
 variable $x$. As an example of using $\Delta_{y,b}$, consider 
 all sets of empty dihypergraphs (dihypergraph  that
 contains no hyperarc). The EGF of such graphs  is
 $\sum_{n\geq 0} x^n/n!$ and then the associated HGF is 
 \begin{equation}\label{eq:theta_y}
   \begin{split}
    \theta_b(x,y) &:= \Delta_{y,b}\left(\sum_{n\geq
        0}\frac{x^n}{n!}\right) ,\\ & =
    \sum_{n\geq 0}^\infty
    \frac{1}{(1+y)^{\binom{n}{b}}} \frac{x^n}{n!}.
    \end{split}
  \end{equation}
Observe that 
de Panafieu and Dovgal~\cite{Dovgal2019} used the \emph{exponential
      Hadamard product} to convert an EGF to a graphic generating
    function when working on digraphs. Such operation is simply defined below.
\begin{dfn}
    The exponential Hadamard product
    of $f(x)=\sum_{ n \geq 0}f_n \frac{x^n}{n!}$  and
    $g(x)=\sum_{ n \geq 0}g_n \frac{x^n}{n!}$ is the exponential
    generating functions of the sequence $(f_ng_n)_{n\geq 0}$.
    It is denoted
    $f(x)\odot g(x)$ and we have
    \[
      f(x)\odot g(x) = \sum_{ n \geq 0}f_ng_n \frac{x^n}{n!}\,.
    \]
  \end{dfn}
Then,  given a family of dihypergraphs $\mathcal F$ with EGF $f$
         and HGF $F$, the linear operator  $\Delta_{y,b}$ and the
         exponential  Hadamard 
  product are linked by the equation
         \[
           F(x,y) = \theta_b(x,y)\odot f(x,y) = \Delta_{y,b}\left(f(x,y)\right),
         \]
         where $\theta_b(x,y)$ is the HGF defined
         by~\eqref{eq:theta_y}. 

      Now, we introduce the \emph{arrow product}
      which already appears  in~\cite{Robinson73,
        Robinson95,Gessel1996b}. 
      The definition of the arrow product
      of two  families  of digraphs  $\mathcal A$ and $\mathcal B$
      viewed as symbolic methods  is
      defined explicitly in~\cite{Dovgal2019}. Such definition is
      extended here to dihypergraphs.
      \begin{dfn}\label{def:arrow_product}
        The arrow product $\mathcal{C}$ of two  families of dihypergraphs
        $\mathcal A$ and $\mathcal B$ is the
        family that consists in pairs $(A,B)$ with
        $A\in \mathcal A$ and $B\in \mathcal B$
        relabeled so that objects $A$ and $B$ have disjoint labels and
        where an arbitrary number of hyperarcs
        have  their tails belonging to  $A$  and
        their heads belong to $B$ (Fig.~\ref{fig:arrow_product}). 
      \end{dfn}

      \begin{figure}[h!]
        \begin{center}
          \begin{tikzpicture}[>=stealth',thick, scale = 0.8]
	\begin{pgfonlayer}{nodelayer}
          \node [style=bn] (0) at (-15, 7) {};
		\node [style=bn] (1) at (-15, 5) {};
		\node [style=bn] (2) at (-17, 6) {};
		\node [style=bn] (3) at (-12, 8) {};
		\node [style=bn] (4) at (-11, 6) {};
		\node [style=bn] (5) at (-9, 8) {};
		\node [style=bn] (6) at (-12, 3) {};
		\node [style=bn] (7) at (-9, 4) {};
		\node [style=none] (8) at (-13, 5) {};
		\node [style=none] (9) at (-9.25, 6.25) {};
		\node [style=none] (10) at (-13, 6) {};
		\node [style=bn] (11) at (-17, 4) {};
		\node [style=bn] (12) at (-11, 4) {};
                % first rectangle 
                \node [style=none] (13) at (-18, 9) {};
		\node [style=none] (14) at (-14, 2.5) {};
                % second triangle 
                \node [style=none] (15) at (-13, 9) {};
		\node [style=none] (16) at (-8, 2.5) {};
                % first caption 
                \node [style=none] (17) at (-16, 2) {$A \in \mathcal
                  A$};
                % first caption 
                \node [style=none] (17) at (-10.5, 2) {$B \in \mathcal
                  B$};
                \node [style=bn] (18) at (-17, 8) {};
	\end{pgfonlayer}
	\begin{pgfonlayer}{edgelayer}
		\draw [style=arc] (2) to [out = 0, in = 180] (0);
		\draw [style=arc] (2) to [out = 0, in = 180] (1);
		\draw [style=arc] (3) to [out = 0] (4);
		\draw [style=arc] (3) to [out = 0] (5);
		\draw [style=edge, bend right,color = red] (1) to (8.center);
%		\draw [style=edge, bend left,color = red] (6) to (8.center);
		\draw [style=arc,color = red] (8.center) to [in = -135] (4);
		\draw [style=arc,color = red] (10.center) to (4);
		\draw [style=edge, color = red] (0) to [out=-45,in =
                180] (10.center); 
		\draw [style=edge, color = red] (1) to [in = 180] (10.center);
		\draw [style=arc,color = red] (0) to (3);
		\draw [style=edge, bend left] (5) to (9.center);
		\draw [style=arc, bend left] (9.center) to (4);
		\draw [style=arc, bend right] (9.center) to (7);
                \draw [style=arc, bend left] (1) to (11);
                % draw first rectangle
                \draw [style = dashed, rounded corners = 5mm,  very
                thick, color = bgreen](13)rectangle (14);
                %draw second rectangle
                \draw [style = dashed, rounded corners = 5mm,  very
                thick, color = bgreen](15)rectangle (16);  
	\end{pgfonlayer}
      \end{tikzpicture}
        \end{center}
        \caption{Arrow product for  dihypergraphs}
        \label{fig:arrow_product}
    \end{figure}
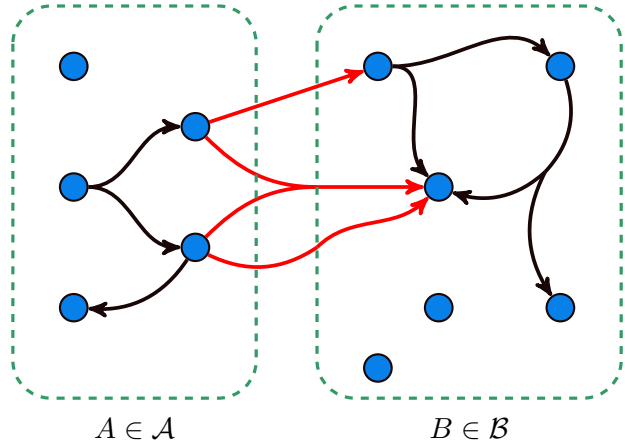
    
The following  lemmas  extend on dihypergraphs some results on
symbolic methods of EGFs (cf. Flajolet and Sedgewick~\cite{FSBook})
and  symbolic methods of GGFs as introduced by
de Panafieu and Dovgal~\cite{Dovgal2019}. 

\begin{lem}\label{lem:sum}
  Given two families $\mathcal F$ and $\mathcal G$  of
  dihypergraphs   with   HGFs   $F(x,y)$ and $G(x,y)$,
  the HGF of the disjoint union $\mathcal F+ \mathcal G$ is
  \[
    F(x,y) +  G(x,y),
  \]
  where $x$ and $y$ mark respectively nodes and hyperarcs.
\end{lem}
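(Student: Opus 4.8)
The plan is to reduce the statement to the linearity of the coefficient-wise construction that defines the HGF, together with the elementary sum rule for labeled classes. First I would fix notation: for each $n\geq 0$, let $f_n(y)$ and $g_n(y)$ denote the polynomials enumerating the members of $\mathcal F$ and of $\mathcal G$ on the label set $\{1,\dots,n\}$, with $y$ marking the number of hyperarcs, so that $F$ and $G$ are given by \eqref{eq:F_xy} with these respective coefficients.

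Next I would analyze the disjoint union. By definition $\mathcal F+\mathcal G$ consists of the objects lying in $\mathcal F$ or in $\mathcal G$, these two families being disjoint; hence on any fixed label set $\{1,\dots,n\}$ every dihypergraph of $\mathcal F+\mathcal G$ belongs to exactly one of $\mathcal F$, $\mathcal G$. Summing $y^{|\mathcal E|}$ over all such objects therefore splits as a sum over $\mathcal F$ plus a sum over $\mathcal G$, so the enumerating polynomial of $\mathcal F+\mathcal G$ on $n$ nodes equals $f_n(y)+g_n(y)$.

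Finally I would substitute this into \eqref{eq:F_xy}: the HGF of $\mathcal F+\mathcal G$ is $\sum_{n\geq 0}\frac{f_n(y)+g_n(y)}{(1+y)^{\binom{n}{b}}}\frac{x^n}{n!}$, and since the division by the factor $(1+y)^{\binom{n}{b}}$ is the same at each fixed $n$ for both families, each coefficient splits linearly and the whole series breaks termwise into two series that I recognize as $F(x,y)$ and $G(x,y)$. Equivalently, the sum rule for EGFs gives that the EGF of $\mathcal F+\mathcal G$ is $f(x,y)+g(x,y)$, and applying the linear operator $\Delta_{y,b}$ of Definition~\ref{def:delta} yields $\Delta_{y,b}(f+g)=\Delta_{y,b}(f)+\Delta_{y,b}(g)=F+G$.

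I do not expect a genuine obstacle here; the single point that must be stated with care is the disjointness hypothesis, which is exactly what guarantees that the $y^{|\mathcal E|}$-counts \emph{add} rather than overcount at each label set. Once that additivity of coefficients is in place, the conclusion is purely the term-by-term linearity already built into the definition \eqref{eq:F_xy} of the hypergraphic generating function.
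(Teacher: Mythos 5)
Your proof is correct and follows essentially the same route the paper intends: the paper omits the proof as ``elementary by means of symbolic methods on EGFs'' together with the linearity of $\Delta_{y,b}$ from Definition~\ref{def:delta}, which is exactly the argument you give (coefficientwise additivity from disjointness, then termwise division by $(1+y)^{\binom{n}{b}}$). Nothing is missing.
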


\begin{lem}\label{lem:subs}
  Given a  family of dihypergraphs $\mathcal F$ with HGF  $F$,
  if a variable $u$ marks the
  number of some family of source components in the HGF $F(x,y,u)$ the 
  HGF for the elements of $\mathcal F$  which have
  a distinguished subset of source components is $F(x, y, u + 1)$
  where $x$ and $y$ mark respectively nodes and hyperarcs.
\end{lem}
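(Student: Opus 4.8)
The plan is to derive this identity as a direct consequence of the binomial theorem applied to the marking variable $u$, together with the observation that the hypergraphic weighting $(1+y)^{-\binom{n}{b}}$ depends only on the number of nodes $n$ and is therefore inert under any substitution in $u$. Writing the multivariate HGF as
\[
F(x,y,u) = \sum_{n,p} \frac{f_{n,p}(y)}{(1+y)^{\binom{n}{b}}}\, u^p\, \frac{x^n}{n!},
\]
where $f_{n,p}(y)$ enumerates the members of $\mathcal F$ on $n$ nodes possessing exactly $p$ source components of the marked family, I would substitute $u \mapsto u+1$ and expand each power by the binomial theorem, $(u+1)^p = \sum_{k=0}^{p} \binom{p}{k} u^k$. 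Interchanging the order of summation and collecting the coefficient of $u^k$ gives
\[
F(x,y,u+1) = \sum_{n,k} \frac{1}{(1+y)^{\binom{n}{b}}} \left( \sum_{p \geq k} \binom{p}{k}\, f_{n,p}(y) \right) u^k\, \frac{x^n}{n!}.
\]

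The key step is the combinatorial reading of the inner sum. For a fixed object with $p$ source components, the factor $\binom{p}{k}$ counts the ways of selecting a $k$-element subset of those source components to be distinguished; summing over $p \geq k$ thus enumerates every pair consisting of an object of $\mathcal F$ together with a distinguished $k$-subset of its marked source components, with weight $u^k$. Consequently the coefficient of $u^k\, x^n/n!$ extracted above is precisely the (HGF-weighted) count of members of $\mathcal F$ on $n$ nodes carrying a distinguished subset of source components of size $k$. This is exactly the statement that $F(x,y,u+1)$ is the HGF of the elements of $\mathcal F$ endowed with a distinguished subset of source components, the new variable $u$ now marking the size of that distinguished subset.

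I do not anticipate a serious obstacle: the only point demanding care is verifying that the hypergraphic weighting passes unchanged through the substitution, which holds because distinguishing source components alters neither the node set nor the hyperarc set of the underlying object, so $\binom{n}{b}$ is untouched and the resulting series is again a bona fide hypergraphic generating function in which $x$ and $y$ still mark nodes and hyperarcs respectively. The identity is therefore the HGF analogue of the classical fact, valid for EGFs and for the GGFs of de~Panafieu and Dovgal~\cite{Dovgal2019}, that replacing a marking variable $u$ by $u+1$ realizes the operation of selecting and marking an arbitrary subset of the objects counted by $u$.
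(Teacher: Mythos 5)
Your proof is correct and is exactly the argument the paper has in mind: the paper merely asserts that this lemma follows ``elementarily by means of symbolic methods on EGFs'' together with the definition of $\Delta_{y,b}$, and your binomial-theorem expansion of $(u+1)^p$ with the combinatorial reading of $\binom{p}{k}$ as choosing the distinguished subset, plus the observation that the weight $(1+y)^{-\binom{n}{b}}$ depends only on $n$ and so commutes with the substitution in $u$, is precisely that elementary argument written out in full.
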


The proofs of   Lemmas~\ref{lem:sum}~and~\ref{lem:subs}
are elementary by means of symbolic methods on
EGFs and Definitions~\ref{def:delta}~and~\ref{def:arrow_product}.

As an example of using the parameter $u$ for a family of
dihypergraphs $\mathcal F$, we may use $u$ to mark
the number of sources in the HGF $F(x,y,u)$.
Then, $F(x,y,1)$ is the HGF of the whole family $\mathcal
F$ without  distinguishing  
 if a node is a source or not and $F(x,y,0)$ is the HGF of
 dihypergraphs in $\mathcal F$ without any source.

\begin{remark}
The substitution of $u$ by $u+1$ means
that  items are marked or left unmarked. Conversely, replacing $u$ with $u-1$
corresponds to an inclusion-exclusion principle.
\end{remark}

\begin{lem}
  Let $F(x, y)$ and $G(x, y)$ be the HGFs of two families
  of dihypergraphs $\mathcal F$ and $\mathcal G$.
  The HGF of the arrow product (cf.~Definition~\ref{def:arrow_product})
  of the families $\mathcal F$  and $\mathcal G$
  is equal to $F(x, y)\,G(x, y)$.
\end{lem}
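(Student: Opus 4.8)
The plan is to translate the combinatorial description of the arrow product into a coefficient identity and then verify that the HGF normalization factor $(1+y)^{\binom{n}{b}}$ is exactly what makes the product factor. First I would write down the exponential generating function of the arrow product $\mathcal C$ of $\mathcal F$ and $\mathcal G$. An object of $\mathcal C$ on $n$ nodes arises by splitting the label set into a part of size $k$ carrying some $A\in\mathcal F$ and a part of size $m=n-k$ carrying some $B\in\mathcal G$, and then adding an arbitrary set of \emph{cross-hyperarcs} whose tail lies in $A$ and whose head lies in $B$. Since the hyperarc set of a dihypergraph is a genuine set, each admissible cross position is independently present or absent, contributing a factor $(1+y)^{P(k,m)}$, where $P(k,m)$ denotes the number of possible cross-hyperarcs. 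Recording the label split by a binomial coefficient, this yields the EGF coefficient
\[
  c_n(y) = \sum_{k+m=n} \binom{n}{k}\, f_k(y)\, g_m(y)\, (1+y)^{P(k,m)}.
\]

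The key step is to evaluate $P(k,m)$. Here I would use the fact that, because $A$ and $B$ carry disjoint label sets, a cross-hyperarc is completely determined by the $b$-element set $S$ of nodes it uses: its tail must be $S\cap A$ and its head $S\cap B$, and both are required to be non-empty. Hence $P(k,m)$ equals the number of $b$-subsets of the $n=k+m$ nodes meeting both the $A$-part and the $B$-part. By inclusion--exclusion (equivalently, Vandermonde's identity), this count is
\[
  P(k,m) = \binom{k+m}{b} - \binom{k}{b} - \binom{m}{b},
\]
the two subtracted terms removing the $b$-subsets lying wholly inside $A$ or wholly inside $B$ (no $b$-subset can lie in both, since the parts are disjoint and $b\ge 2$).

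It then remains to apply the operator $\Delta_{y,b}$ of Definition~\ref{def:delta}, that is, to divide the coefficient of $x^n/n!$ by $(1+y)^{\binom{n}{b}}$. Using the identity above, the cross-arc factor combines with the normalization as
\[
  \frac{(1+y)^{P(k,m)}}{(1+y)^{\binom{n}{b}}}
    = \frac{1}{(1+y)^{\binom{k}{b}}}\,\frac{1}{(1+y)^{\binom{m}{b}}},
\]
so that the HGF of $\mathcal C$ becomes
\[
  C(x,y) = \sum_{n\ge 0}\frac{x^n}{n!}
    \sum_{k+m=n}\binom{n}{k}
      \frac{f_k(y)}{(1+y)^{\binom{k}{b}}}\,
      \frac{g_m(y)}{(1+y)^{\binom{m}{b}}},
\]
which is precisely the product $F(x,y)\,G(x,y)$ of the two HGFs.

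The only genuinely substantive point is the evaluation of $P(k,m)$: one must observe that the tail/head partition of a cross-hyperarc is \emph{forced} by its underlying $b$-set once the node-set has been split between $\mathcal F$ and $\mathcal G$, so that cross-hyperarcs biject with the straddling $b$-subsets. Everything else is standard EGF labelled-product bookkeeping together with the algebraic cancellation of the $(1+y)$ powers, which is exactly where the specific exponent $\binom{n}{b}$ in the definition of the HGF reveals its purpose.
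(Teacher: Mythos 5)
Your proof is correct and follows essentially the same route as the paper: the paper's proof computes the coefficient of $F(x,y)G(x,y)$ to get the factor $(1+y)^{\binom{n}{b}-\binom{k}{b}-\binom{n-k}{b}}$, and immediately after gives exactly your combinatorial identification of this exponent with the number of straddling $b$-subsets (each forcing a unique tail/head split). You merely run the argument in the forward direction, from the combinatorics of the arrow product to the product of HGFs, which if anything makes the verification more self-contained.
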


\begin{proof}
Let $(f_n(y))$ and $(g_n(y))$ be the associated sequences of the two
families $\mathcal F$  and $\mathcal G$. Then, the sequence
associated to the HGFs $F(x,y)G(x,y)$ is
\begin{align*}
  c_n(y)&=(1+y)^{\binom{n}{b}}  n![x^n]\left(
\sum_{k\geq 0}    \frac{f_k(y)}{(1+y)^{\binom{k}{b}}}\frac{x^k}{k!}\right)\left(
\sum_{\ell\geq 0}     \frac{g_\ell(y)}{(1+y)^{\binom{\ell}{b}}}\frac{x^\ell}{\ell!}
          \right),\\
  & = \sum_{k=0}^n\binom{n}{k}(1+y)^{\binom{n}{b}-\binom{k}{b} -
    \binom{n-k}{b}}f_k(y)g_{n-k}(y). 
\end{align*}
\end{proof}

There is a direct combinatorial explanation
for the exponent $\binom{n}{b} - \binom{k}{b} - \binom{n-k}{b}$.
Consider two dihypergraphs $F$ and $G$ of sizes $k$ and $n-k$,
and their arrow product $H$ (of size $n$).
$F$ and $G$ are combined and relabeled.
Any of the $\binom{n}{b}$ possible sets of nodes
can become a hyperarc from the arrow product,
except the $\binom{k}{b}$ sets that contain only nodes from $F$,
and the $\binom{n-k}{b}$ sets that contain only nodes from $G$. 
We can also use  the Vandermonde's identity for any nonnegative
integers $b, m, n$: $$    {{m+n \choose b}=\sum _{k=0}^{b}{m \choose
    k}{n \choose b-k}},$$ 
to show that
\[
  \binom{n}{b}-\binom{k}{b} - \binom{n-k}{b} =
  \sum_{i+j=b,i,j>0}{k \choose i}{n -k \choose j} \,.
\]

\begin{lem}\label{lem:total_hyperedge}
  The total number of   hyperarcs on $n$ nodes is equal to
  \[
    (2^b-2)\binom{n}{b},\quad \mbox{    for $b\geq 2$.}
  \]
\end{lem}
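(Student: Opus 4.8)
The plan is to count directly the number of distinct hyperarcs that can be built on a fixed set of $n$ labeled nodes, exploiting the fact that in a $b$-uniform dihypergraph each hyperarc $e = (T(e), H(e))$ is an ordered pair of disjoint non-empty subsets with $|T(e)| + |H(e)| = b$. The key observation is that, since $T(e)$ and $H(e)$ are disjoint, a single hyperarc occupies exactly $b$ of the $n$ nodes, namely those belonging to $T(e) \cup H(e)$. This reduces the problem to two independent choices: which $b$ nodes are used, and how those $b$ nodes are split into a tail and a head.

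First I would count the choice of the underlying $b$-element node set. Since the nodes are labeled, there are exactly $\binom{n}{b}$ such subsets, and this factor is independent of how the chosen nodes are afterwards distributed between the tail and the head.

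Next, for a fixed $b$-element subset $S$, I would count the ordered pairs $(T, H)$ of disjoint subsets with $T \cup H = S$, $T \neq \emptyset$ and $H \neq \emptyset$. Each node of $S$ is assigned either to the tail $T$ or to the head $H$, which gives $2^b$ assignments in total. The only assignments forbidden by the definition of a hyperarc are the one placing every node in the head (so that $T = \emptyset$) and the one placing every node in the tail (so that $H = \emptyset$); discarding these two degenerate assignments leaves $2^b - 2$ admissible ordered splits.

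Finally I would multiply the two independent contributions to obtain $(2^b - 2)\binom{n}{b}$, which is the claimed count. The argument is entirely elementary and I do not expect a genuine obstacle; the single point that demands care is the exclusion of \emph{exactly} the two degenerate partitions, since this is what produces the factor $2^b - 2$ in place of $2^b$ and what enforces the non-emptiness of both $T(e)$ and $H(e)$ required by the definition of a hyperarc.
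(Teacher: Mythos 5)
Your proof is correct. It takes the same elementary direct-counting route as the paper, but organizes the count in the opposite order: the paper first fixes the tail size $k$, counts $\binom{n}{k}\binom{n-k}{b-k}$ hyperarcs with $k$ tails and $b-k$ heads, and then sums over $k=1,\dots,b-1$, which requires the identity $\sum_{k=1}^{b-1}\binom{n}{k}\binom{n-k}{b-k}=(2^b-2)\binom{n}{b}$ (a Vandermonde-type simplification). You instead first choose the $b$-element support in $\binom{n}{b}$ ways and then count the $2^b-2$ ordered splits of that support into a non-empty tail and a non-empty head. The two factorizations give the same answer; yours has the small advantage of producing the closed form $(2^b-2)\binom{n}{b}$ immediately without any summation identity, while the paper's version makes the refined count by tail size explicit. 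Your care in excluding exactly the two degenerate assignments ($T=\emptyset$ or $H=\emptyset$) is precisely the point that matters, and it matches the definition of a hyperarc given in the paper.
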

  
  \begin{proof}
    The   number of hyperarcs with exactly $k$ tails ($0<k< b$)
    and $b-k$ heads  is equal    to
\[
  \binom{n}{k}\binom{n-k}{b-k}.
\]
Summing over $k$, we have
    \[
 \sum_{k=1}^{b-1} \binom{n}{k}\binom{n-k}{b-k} = (2^b-2)\binom{n}{b}.
    \]
  \end{proof}

  \begin{lem} \label{lem:EGF_HGF}
    The EGF of all dihypergraphs $h(x, y)$ is
    \begin{equation}
    h(x,y) = \sum_{n\geq 0}^\infty
    (1+y)^{(2^{b}-2)\binom{n}{b}}\frac{x^n}{n!}. 
    \end{equation}
  The HGF of all dihypergraphs $H(x, y)$ is
  \begin{equation}\label{eq:Hxy}
    H(x,y) = \sum_{n\geq 0}^\infty
    (1+y)^{(2^{b}-3)\binom{n}{b}}\frac{x^n}{n!}. 
    \end{equation}
  \end{lem}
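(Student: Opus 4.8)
The plan is to first establish the EGF by a direct enumeration of the $b$-uniform dihypergraphs on a fixed label set, and then to obtain the HGF by a purely mechanical application of the operator $\Delta_{y,b}$ (equivalently, of the definition \eqref{eq:F_xy}).

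First I would fix $n$ labeled nodes and count the dihypergraphs carried by them, with the variable $y$ marking the number of hyperarcs. The key structural observation is that a dihypergraph on these nodes is nothing more than a choice, for each admissible hyperarc, of whether to include it; since $\mathcal E$ is by definition a \emph{set} of hyperarcs, distinct admissible hyperarcs are selected independently and no combination is forbidden. By Lemma~\ref{lem:total_hyperedge}, the number of admissible hyperarcs on $n$ nodes is exactly $(2^b-2)\binom{n}{b}$. Assigning the weight $y$ to each present hyperarc and $1$ to each absent one, and summing over all subsets, the weighted count factors as a product of $(1+y)$ over the admissible hyperarcs. Hence $f_n(y) = (1+y)^{(2^b-2)\binom{n}{b}}$, and substituting this into \eqref{eq:f_xy} yields the claimed EGF $h(x,y)$.

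Next I would pass from the EGF to the HGF. By Definition~\ref{def:delta} together with \eqref{eq:F_xy}, the operator $\Delta_{y,b}$ simply divides the coefficient of $x^n/n!$ by $(1+y)^{\binom{n}{b}}$. Applying this to the $f_n(y)$ just computed gives
\[
  \frac{(1+y)^{(2^b-2)\binom{n}{b}}}{(1+y)^{\binom{n}{b}}}
  = (1+y)^{\left((2^b-2)-1\right)\binom{n}{b}}
  = (1+y)^{(2^b-3)\binom{n}{b}},
\]
and summing over $n\ge 0$ produces exactly \eqref{eq:Hxy}.

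There is no genuine obstacle here: the arithmetic content of the lemma is entirely carried by Lemma~\ref{lem:total_hyperedge}, and the remaining ingredients are the independence of the hyperarc choices (giving the product form of $f_n(y)$) and the bookkeeping of exponents under $\Delta_{y,b}$. The only point deserving a line of care is to confirm that the enumeration ranges over all \emph{subsets} of the admissible hyperarc set, without repetition and without constraints, which is immediate from the definition of a dihypergraph as a pair $(\mathcal V,\mathcal E)$ with $\mathcal E$ a set of hyperarcs.
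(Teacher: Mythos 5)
Your proof is correct and follows the same route the paper intends: the paper's proof simply states that the result is immediate from Lemma~\ref{lem:total_hyperedge} and the definition of the HGF, and you have merely spelled out the independent-choice argument giving $f_n(y)=(1+y)^{(2^b-2)\binom{n}{b}}$ and the division by $(1+y)^{\binom{n}{b}}$. No issues.
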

  
  \begin{proof} The proof is obvious from the definition of the HGFs and
    by Lemma~\ref{lem:total_hyperedge}.
    \end{proof}
\section{Acyclic or strong  dihypergraphs} 
In this Section, we give exact enumerations of acyclic
or strongly %, or weakly
dihypergraphs.
Our results extend those
in~\cite{Stanley73,Robinson73,Gessel1996, Robinson77,Dovgal2019}
on enumeration of  these families in digraphs to dihypergraphs.
We notice also that
a different approach has been given
by Ostroff~\cite{PhD2013} to count strong digraphs

Let us recall that Robinson~\cite[Corollary~ 1]{Robinson73}
showed that the counting sequence $\alpha_ n(y)$
of acyclic digraphs on $n$ nodes
satisfies
\[
  \sum_{n=0}^\infty \frac{\alpha_n(y)}{(1+y)^{\binom{n}{2}}}\frac{x^n}{n!} =
 \left(  \sum_{n=0}^\infty \frac{(-1)^n}{(1+y)^{\binom{n}{2}}}
    \frac{x^n}{n!}\right)^{-1}.
\]
Theorem~\ref{THM:ACYCLIC2} generalizes this identity for
dihypergraphs.  Let us  define the
 HGF of the sequence $((-1)^n)_{n\geq 0}$ denoted $\phi(x,y)$. We have
\begin{equation}\label{eq:EGF-phi}
\phi(x,y):=\sum_{n=0}^{\infty}\frac{(-x)^n}{n! \, (1+y)^{{n \choose b}}} \,.
\end{equation}

\begin{thm}\label{THM:ACYCLIC2}
Let $a_n(y) = \sum_{q=0}^{\binom{n}{b}} a_{n,q} y^q$ be the counting
sequence of acyclic dihypergraphs 
where $a_{n,q}$ denotes  the number of acyclic dihypergraphs with $n$
nodes and $q$ hyperarcs, and
$ A(x,y) = \sum_{n=0}^\infty
\frac{a_n(y)x^n}{n!(1+y)^{\binom{n}{b}}}$ be its associated HGF.
$A(x,\,y)$ satisfies
  \begin{equation}\label{eq:identity}
    A(x,y) = \phi(x,y)^{-1} ,
  \end{equation}
  where    $\phi$ is defined by~\eqref{eq:EGF-phi}.
\end{thm}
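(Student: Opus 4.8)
The plan is to prove the equivalent identity $A(x,y)\,\phi(x,y)=1$. First I would observe that $\phi(x,y)=\theta_b(-x,y)$, so that $\phi$ is the (signed) HGF of the family of empty dihypergraphs, the attached sequence being $\bigl((-1)^n\bigr)_{n\ge 0}$. Applying the lemma computing the HGF of an arrow product (with $\mathcal F=\mathcal A$ the acyclic dihypergraphs, so $f_k=a_k(y)$, and $g_\ell=(-1)^\ell$), the sequence $(c_n(y))$ attached to the HGF $A\phi$ is
\[
  c_n(y)=\sum_{k=0}^{n}\binom{n}{k}(-1)^{n-k}(1+y)^{\binom{n}{b}-\binom{k}{b}-\binom{n-k}{b}}a_k(y),
\]
and since $A\phi=\sum_n \frac{c_n(y)}{(1+y)^{\binom{n}{b}}}\frac{x^n}{n!}$, it suffices to check that $c_0(y)=1$ (clear, as $a_0=1$) and $c_n(y)=0$ for every $n\ge 1$.

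The heart of the argument is a combinatorial reading of this alternating sum. Setting $j=n-k$, each summand $\binom{n}{j}(1+y)^{\binom{n}{b}-\binom{n-j}{b}-\binom{j}{b}}a_{n-j}(y)$ should be interpreted, via the arrow product $\mathcal E\to\mathcal A$ of a $j$-node empty dihypergraph with an acyclic dihypergraph on the remaining $n-j$ nodes, as the number (weighted by $y^{\#\text{hyperarcs}}$) of pairs $(H,S)$ where $H$ is acyclic on $[n]$ and $S$ is a $j$-subset of its sources; here the exponent $\binom{n}{b}-\binom{n-j}{b}-\binom{j}{b}$ counts exactly the straddling hyperarc slots (tail in $S$, head in the complement), as in the arrow-product lemma. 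Equivalently, this is the inclusion–exclusion substitution $u\mapsto u-1$ of Lemma~\ref{lem:subs} applied to the variable $u$ marking sources, and summing over $j$ yields
\[
  c_n(y)=\sum_{H \text{ acyclic on } [n]} y^{e(H)}\sum_{S\subseteq \mathrm{Src}(H)}(-1)^{|S|}
        =\sum_{H \text{ acyclic on } [n]} y^{e(H)}\,(1-1)^{|\mathrm{Src}(H)|},
\]
where $\mathrm{Src}(H)$ is the set of sources of $H$. The inner factor vanishes as soon as $\mathrm{Src}(H)\neq\emptyset$, and by Lemma~\ref{lem:out-component} this is the case for every non-empty acyclic dihypergraph; hence only the empty dihypergraph ($n=0$) contributes, giving $c_0(y)=1$ and $c_n(y)=0$ for $n\ge 1$, which is the desired identity $A=\phi^{-1}$.

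I expect the main obstacle to be justifying the bijection underlying the middle paragraph in the hypergraph setting, rather than the signed sum itself. Concretely, one must verify that the acyclic dihypergraphs on $[n]$ carrying a distinguished $j$-subset $S$ of sources are in weight-preserving bijection with the arrow product of a $j$-node empty dihypergraph and an acyclic dihypergraph on $[n]\setminus S$. This amounts to three checks: (i) a node of $S$ carries no incoming and no internal hyperarc, so every hyperarc meeting $S$ has its tail contained in $S$ and its head in the complement, i.e. it is precisely an arrow-product hyperarc; (ii) deleting $S$ and these hyperarcs leaves an acyclic dihypergraph; and (iii) conversely, every such arrow product is acyclic and has $S$ among its sources. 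Point (i), which hinges on the exact notion of source and of reachability for dihypergraphs recalled in the Definitions section, is the delicate step; items (ii) and (iii) are routine, since deleting nodes preserves acyclicity and no hyperarc can re-enter a set with no incoming hyperarc.
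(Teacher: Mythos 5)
Your argument follows the same route as the paper's: mark a subset $S$ of sources, read the marked object as an arrow product of an empty dihypergraph on $S$ with an acyclic dihypergraph on $[n]\setminus S$, and kill every non-empty term with the alternating sum $\sum_{S\subseteq \mathrm{Src}(H)}(-1)^{|S|}$. The coefficient extraction, the identification $\phi(x,y)=\theta_b(-x,y)$, and the final vanishing argument via the existence of a source are all fine. The gap is exactly at the step you yourself single out as delicate, namely (i), and the justification you give for it does not hold once $b\ge 3$. From ``$s\in S$ has no incoming hyperarc'' you may conclude that $H(e)\cap S=\emptyset$ for every hyperarc and that nodes of $S$ occur only in tails; you may \emph{not} conclude that a hyperarc meeting $S$ has its entire tail inside $S$. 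For $b\ge 3$ a hyperarc can have $T(e)=\{s,v\}$ with $s\in S$, $v\notin S$ and $H(e)\subseteq [n]\setminus S$: it meets $S$, yet it is neither an internal hyperarc of the acyclic part on $[n]\setminus S$ nor an arrow-product hyperarc (whose tail must lie entirely in $S$, that being what the exponent $\binom{n}{b}-\binom{j}{b}-\binom{n-j}{b}$ counts). Worse, such a hyperarc induces the arc $v\to h$ for each $h\in H(e)$ inside $[n]\setminus S$, so it interacts with acyclicity and cannot be adjoined freely. Only for $b=2$ is a tail a single node, which forces $T(e)\subseteq S$ automatically; that is why the classical digraph argument goes through.

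The gap is substantive, not presentational. Take $b=3$, $n=3$, $S=\{1\}$: the dihypergraphs in which $1$ is a source may use the hyperarcs $\{1\}\to\{2,3\}$, $\{1,2\}\to\{3\}$, $\{1,3\}\to\{2\}$, and the last two together create the cycle $2\to 3\to 2$, so the acyclic ones are weighted $1+3y+2y^2$, whereas the arrow-product side gives $(1+y)^{1}a_2(y)=1+y$. Propagating this, the identity $A\phi=1$ would force $a_3(y)=1$ for $b=3$, while direct enumeration of the six possible hyperarcs on three nodes (all singletons acyclic, six acyclic pairs, no acyclic triples) gives $a_3(y)=1+6y+6y^2$. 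So the weight-preserving bijection asserted in your middle paragraph fails as stated, and with it the identity $c_n(y)=\sum_H y^{e(H)}(1-1)^{|\mathrm{Src}(H)|}$; repairing the decomposition (or changing the normalization of the HGF) is the real content that is missing here.
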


\begin{proof}
  
  Let $u$ be  the variable marking  the number of sources 
  in the EGF or  HGF of all acyclic dihypergraphs $A(x,y,u)$.
  By the Lemma~\ref{lem:subs},
  the  HGF for the dihypergraphs where each source node   
  is either marked, or
  left unmarked by the variable $u$ is $A(x,y,u+1)$.
    Next, the EGF of a set of isolated nodes 
  is $\exp(ux)$ (dihypergraph without any hyperarc)
  and so the associated HGF is $\Delta_{y,b}\left(\exp(ux)\right)$.
We observe that  an acyclic dihypergraph with some marked sources can 
be viewed  as an arrow product of a set of  nodes
(the marked sources) with an acyclic
dihypergraph. This decomposition
  implies
\[
  A(x,y,u+1)=\Delta_{y,b}\left(\exp(ux) \right)\, \times  \, A(x,y) \, .
\]
Substituting $u$ by $-1$ leads to $A(x,y,0) = 1$ (the only acycic dihypergraph
without a source  is the empty dihypergraph). Since $\Delta_{y,b}\left(\exp(ux) \right)
= \phi(x,y)$ where $\phi$ is given by~\eqref{eq:EGF-phi}, we get the result.
\end{proof}

\begin{remark}
  Similar proof can be obtained using first the inclusion-exclusion
  principle to get    $a_0(y)=1$ and for $n\geq 1$
  \begin{equation}\label{eq:number_acyclic_rec1}
     a_n (y)= \sum_{k=1}^n (-1)^{k-1} \binom{n}{k}
     (1+y)^{\binom{n}{b}-\binom{k}{b}-\binom{n-k}{b}} a_{n-k}(y) \, ,
   \end{equation}
   which can be rewritten as
   \begin{equation}\label{eq:number_acyclic_rec2}
     \sum_{k=0}^n(-1)^{n-k}
     \binom{n}{k}(1+y)^{\binom{n}{b}-\binom{k}{b}-\binom{n-k}{b}}a_{k}=\delta_{n0},
   \end{equation}
   where $\delta_{n0}$ is Kronecker's symbol, and then by checking  
   that $A(x,y)\phi(x,y)=1$. \\
   In term of $n$, an explicit expression of
$a_n(y)$ can be obtained from the identity $A(x,y)\phi(x,y)=1$:
\[
     a_n(y) = \sum_{j \geq 0} (-1)^j \sum_{n_1 + \cdots + n_j = n}
     \binom{n}{n_1, \ldots, n_j} (1+y)^{\binom{n}{b} - \sum_{i=1}^j
       \binom{n_i}{b}} 
\]
\end{remark}
  \begin{thm}\label{theo:hgf}
  Let $S$ be the set of all strongly connected dihypergraphs, if
  $s$ is the  associated EGF, then the HGF
  of all dihypergraphs  defined by   \eqref{eq:Hxy} and
  the EGF  $s(x,y)$ verify
  \begin{equation}\label{eq:strong_tout}
    H(x,y) = \left(\Delta_{y,b} \left(\exp\left(-s(x,y)\right)\right) \right)^{-1}.
  \end{equation}
\end{thm}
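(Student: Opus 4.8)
The plan is to follow the scheme of the proof of Theorem~\ref{THM:ACYCLIC2}, with the single-node sources there replaced by source strong components. First I would introduce a variable $u$ marking the number of source strong components and let $H(x,y,u)$ denote the multivariate HGF of all dihypergraphs in which $u$ records these components, so that $H(x,y,1)=H(x,y)$ is the HGF of all dihypergraphs given by~\eqref{eq:Hxy}. By Lemma~\ref{lem:out-component} every non-empty dihypergraph has at least one source strong component, hence setting $u=0$ annihilates every non-empty object and leaves only the empty dihypergraph: $H(x,y,0)=1$.

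Next I would invoke Lemma~\ref{lem:subs}: the HGF of dihypergraphs carrying a distinguished subset of their source strong components is $H(x,y,u+1)$, the surviving $u$ now marking the distinguished components. The structural heart is the decomposition of such an object as an arrow product (Definition~\ref{def:arrow_product}): the distinguished source strong components form a set of strongly connected dihypergraphs --- with EGF $\exp\!\left(u\,s(x,y)\right)$ and hence HGF $\Delta_{y,b}\!\left(\exp\!\left(u\,s(x,y)\right)\right)$ --- arrow-multiplied with the arbitrary dihypergraph supported on the remaining nodes, whose HGF is $H(x,y)$. Applying the HGF arrow-product lemma (the HGF of an arrow product is the product of the HGFs) would yield the functional equation
\[
H(x,y,u+1)=\Delta_{y,b}\!\left(\exp\!\left(u\,s(x,y)\right)\right)\,H(x,y).
\]
Finally I would specialise $u=-1$. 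The left-hand side becomes $H(x,y,0)=1$ and the right-hand side becomes $\Delta_{y,b}\!\left(\exp(-s(x,y))\right)\,H(x,y)$, so that $\Delta_{y,b}\!\left(\exp(-s(x,y))\right)\,H(x,y)=1$, which is exactly~\eqref{eq:strong_tout}. As in the Remark following Theorem~\ref{THM:ACYCLIC2}, this substitution is the generating-function incarnation of an inclusion--exclusion over the source strong components, and I would record the corresponding recurrence for the coefficients as an independent check.

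The step I expect to be the main obstacle is the rigorous justification of the arrow-product decomposition, and here the hypergraph setting genuinely differs from the digraph case treated in~\cite{Dovgal2019}. One must verify that peeling off the distinguished source strong components yields a bona fide bijection onto arrow products: the distinguished components must receive no incoming hyperarc (so that they are genuinely sources and are never merged with the remainder or with one another), and every hyperarc of the whole must be accounted for as either internal to one of the two parts or as a legitimate cross-hyperarc with tail in the source part and head in the remainder. Because a $b$-uniform hyperarc may straddle the two parts through several of its tail or head nodes, showing that these straddling hyperarcs are handled correctly --- and that their contribution reduces to precisely the cross factor $(1+y)^{\binom{n}{b}-\binom{k}{b}-\binom{n-k}{b}}$ already identified for the arrow product --- is the delicate point. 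I would address it by tracking hyperarcs $b$-subset by $b$-subset, exactly as in the proof of the arrow-product lemma and the counting identity of Lemma~\ref{lem:total_hyperedge}, and by cross-checking the resulting coefficient recurrence against the inclusion--exclusion above.
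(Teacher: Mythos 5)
Your proposal is correct and follows essentially the same argument as the paper: marking source strong components with $u$, applying Lemma~\ref{lem:subs}, decomposing a dihypergraph with distinguished source components as an arrow product of a set of strong dihypergraphs with a dihypergraph to get $H(x,y,u+1)=\Delta_{y,b}\left(\exp(u\,s(x,y))\right)H(x,y)$, and substituting $u=-1$. The bijection concerns you flag at the end are legitimate but are treated as immediate in the paper, which simply asserts the decomposition.
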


\begin{proof}
  Let  $u$  be a variable  marking the number of 
  strongly connected components which 
  are source components~(see Lemma~\ref{lem:out-component}) in the EGF or 
  in the HGF $H(x,y,u)$ of all
  dihypergraphs.    By the Lemma~\ref{lem:subs},
  the  HGF for the dihypergraphs where each source strong 
  component is either marked, or
  left unmarked by the variable $u$ is $H(x,y,u+1)$.
  Next, the EGF of the set of  strongly connected components
  is $\exp(u\, s(x,y))$ and so the associated HGF is
  $\Delta_{y,b}\left( \exp(u\, s(x,y)) \right)$.
  We observe that a  dihypergraph with some marked source components
  can be viewed  as an arrow product of a set of  strong
  dihypergraphs
  (the marked source components)  with a dihypergraph. This
  decomposition   implies
\[
  H(x,y,u+1)=\Delta_{y,b}\left(\exp(u\, s(x,y))\right)\times H(x,y) \, .
\]
Then replacing $u$ with $-1$  gives the result since  $H(x,y,0) = 1$
(the only dihypergraph
without a source component is the empty dihypergraph).
\end{proof}

\begin{remark}
  Notice also that  Theorem~\ref{theo:hgf} leads to  a recursive
  relation satisfied by $(s_n(y))$ where
   $s_n (y)= n![x^n]s(x,y)$ with $s(x,y)$ is the EGF of
   all strongly connected dihypergraphs. Following  the same techniques
   using by Robinson in~\cite[Section~4.]{Robinson73}, we can easily
   show that $s_0(y)=1$ and
    \[
      s_n(y) = \lambda_n(y) + \sum_{t=1}^{n-1}\binom{n-1}{t}s_{n-t}(y)\lambda_t(y) \, ,
    \]
  with
  \[
  \lambda_n(y) = (1+y)^{(2^{b}-2)\binom{n}{b}} -\sum_{t=1}^{n-1}
  \binom{n}{t} (1+y)^{(2^{b}-2)\binom{t}{b}}\lambda_{n-1}(y) \, .
\] 
\end{remark}

\section{Conclusion}

 Our paper deal with directed uniform hypergraphs by introducing a
  specific type of generating functions to obtain
  generating functions of acyclic and strong  dihypergraphs.
  We think that many families of dihypergraphs can be enumerated
  using the same
  methods. More generally, what is the most general model of
  graph-like objects 
  where DAGs and strongly connected components can be defined
  and counted following the same techniques?
  
  In future works, it would be interesting to compute the asymptotic
  number of these 
  combinatorial structures (as in~\cite{Bender1986} for dense digraphs
  and in~\cite{Dovgal2020, NVS} for sparse random digraphs) and to study the
   appearance of strongly connected components
   (as in ~\cite{Karp,Luczak}) during some random dihypergraphs
   processes. For example, 
   when enriching the structures by adding hyperarc one by one,
   how many hyperarcs are needed to have asymptotically
   almost surely structures containing complex strong components?
\nocite{*}
\bibliographystyle{plain} 
\bibliography{dihypergraph}

\end{document}